\numberwithin{equation}{section}
\newtheorem{thm}{Theorem}
\newtheorem{prop}{Proposition}
\newtheorem{lem}{Lemma}
\newtheorem{cor}{Corollary}
\theoremstyle{definition}
\newtheorem*{notation}{Notations}
\thanks{}
\begin{document}

\title{Estimates for the norm of the derivative of Lie exponetial map for connected Lie groups}
\author[R. Bidar]{Reza Bidar}
\address{Department of Mathematics\\
                 Albion College\\
                 Albion, MI 49224}
\email{mbidar@albion.edu}                 
\date{\today}

\begin{abstract}

Let $G$ be a real connected Lie group with a left invariant metric $d$, $\mathfrak{g}$ its Lie algebra. In this paper we present a set of interesting upper and lower bounds for $|d\exp_{x}(y)|,\  x,y \in \mathfrak{g}$. If $\textrm{ad}_x$ is diagonalizable, these bounds only depend on eigenvalues of $\textrm{ad}_x$, but in general they are functions of the singular values $\textrm{ad}_x$.


\end{abstract}

\maketitle


\section{Introduction}

Let $G$ be a connected real Lie group with a left invariant Riemannian metric $d$, and $\mathfrak{g}$ be its Lie Algebra as an inner product space, and $\exp: \mathfrak{g} \rightarrow G$ the Lie exponential map. For $g \in G$ let $l_g$ denote the left multiplication by $g$. One important question that arise about the exponential map would be asking if there are conditions under which the exponential map is quasi-isometry. This is trivially true if the universal covering of $G$ is $\mathds{R}^n$.  The other conditions that might be worthy of investigation are when $G$ is compact, semi-simple, solvable or nilpotent. In this paper we present general lower and upper bounds for the norm of the differential of the exponential map which provides valuable information regarding the behavior of the exponential map. These bounds show that in general, the exponential map for these types of Lie groups is not a quasi-isometry. \\ 
\par
Given a non-zero vector $x \in \mathfrak{g}$, it is well known that the differential of the exponential map at $x$ is given by
\begin{equation}
    d\exp_{x}=dl_{\exp(x)} \frac{1-e^{-\textrm{ad}_x}}{\textrm{ad}_x}
\end{equation}
Rossman \cite[p.15]{Lie Groups-An Intro}. Since the metric $d$ is left invariant, it follows that for any vector $y \in \mathfrak{g}$ 
\begin{equation*}
    \left|d\exp_{x}(y)\right|=\left| \frac{1-e^{-\textrm{ad}_x}}{\textrm{ad}_x}(y)\right|\, .
\end{equation*} 
Thus the problem of finding upper and lower bounds for $\left|d\exp_{x}(y)\right|$ would be equivalent to finding estimates for the norm of the image of 
\begin{equation*}
    \frac{1-e^{-\textrm{ad}_x}}{\textrm{ad}_x} \ ,
\end{equation*}
which can be regarded as a compact operator on $\mathfrak{g}$ as a finite dimensional Hilbert space. 
When $\textrm{ad}_x$ is diagonalizable it is possible to bound the differential of exponential map by the biggest and the smallest eigenvalues of $(1-e^{-\textrm{ad}_x})/\textrm{ad}_x$ as stated in the following theorem.

\begin{thm}\label{thm-main}
Let $x \in \mathfrak{g}$ be non-zero and such that $\emph{ad}_x$ is diagonalizable. Let $\hat{x}=x/|x|$, $\lambda_1,\cdots,\lambda_{p} \in \mathds{C}$ be non-zero eigenvalues of $\emph{ad}_{\hat{x}}$, and

\begin{equation*}
    \tilde{\lambda}_{\min}(|x|)=\min \left\{1,\left| \frac{1-e^{-\lambda_{1}|x|}}{\lambda_{1}|x|}\right|,\cdots, \left|\frac{1-e^{-\lambda_{p}|x|}}{\lambda_{p}|x|}\right|\right\},
\end{equation*}

\begin{equation*}
\tilde{\lambda}_{\max}(|x|)=\max \left\{1,\left| \frac{1-e^{-\lambda_{1}|x|}}{\lambda_{1}|x|}\right|,\cdots, \left|\frac{1-e^{-\lambda_{p}|x|}}{\lambda_{p}|x|}\right|\right\}\,.
\end{equation*}
Then there exist positive constants $C,D$, only depending on $\hat{x}$, such that for any unit vector $y \in \mathfrak{g}$
\begin{equation}
   C\tilde{\lambda}_{\min}(|x|) \leq \left| d\exp_{x} (y) \right| \leq D \tilde{\lambda}_{\max}(|x|)\, .
\end{equation}
\end{thm}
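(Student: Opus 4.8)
The plan is to reduce the estimate to a single, $|x|$-independent similarity of diagonal operators. Passing to the complexification $\mathfrak{g}_{\mathbb{C}} = \mathfrak{g}\otimes_{\mathbb{R}}\mathbb{C}$ with the Hermitian inner product extending the given one, I would first record that $\mathrm{ad}_x = |x|\,\mathrm{ad}_{\hat x}$, so that the displayed formula for $d\exp_x$ together with left-invariance of $d$ gives
\begin{equation*}
\bigl|d\exp_x(y)\bigr| = \bigl|\varphi(|x|\,\mathrm{ad}_{\hat x})\,y\bigr|,\qquad \varphi(z):=\frac{1-e^{-z}}{z},
\end{equation*}
for all $y\in\mathfrak{g}$, where $\varphi$ is entire, $\varphi(0)=1$, and $\varphi$ has real Taylor coefficients. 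Thus $\varphi(t\,\mathrm{ad}_{\hat x})$ is a norm-convergent real-coefficient power series in the real operator $\mathrm{ad}_{\hat x}$, hence preserves $\mathfrak{g}$, and its operator norm --- and, when it is invertible, the reciprocal of the operator norm of its inverse --- are unaffected by complexification. These are precisely the quantities that upper- and lower-bound $|d\exp_x(y)|$ over unit vectors $y$, so it suffices to estimate them.

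Next I would fix, once and for all and depending only on $\hat x$, an invertible complex matrix $P$ with $P^{-1}\,\mathrm{ad}_{\hat x}\,P=\Lambda=\operatorname{diag}(\mu_1,\dots,\mu_n)$, where the $\mu_i$ are the eigenvalues of $\mathrm{ad}_{\hat x}$ listed with multiplicity; such $P$ exists precisely because $\mathrm{ad}_x$, hence $\mathrm{ad}_{\hat x}$, is diagonalizable. The key point --- and the reason this yields constants independent of $|x|$ --- is that rescaling the operator does not move the eigenvectors: $P^{-1}(t\,\mathrm{ad}_{\hat x})P=t\Lambda$, so
\begin{equation*}
\varphi(t\,\mathrm{ad}_{\hat x})=P\,\operatorname{diag}\bigl(\varphi(t\mu_1),\dots,\varphi(t\mu_n)\bigr)\,P^{-1}.
\end{equation*}
Since $\hat x$ is killed by $\mathrm{ad}_{\hat x}$, the value $0$ appears among the $\mu_i$, so the diagonal entries of $\varphi(t\Lambda)$ make up exactly the set $\{1\}\cup\{\varphi(\lambda_j t):1\le j\le p\}$. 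Evaluating at $t=|x|$, the operator norm of the diagonal matrix $\varphi(|x|\Lambda)$ is therefore $\tilde\lambda_{\max}(|x|)$, and --- when $\varphi(|x|\Lambda)$ is invertible --- the operator norm of its inverse is $1/\tilde\lambda_{\min}(|x|)$.

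Finally, setting $\kappa:=\|P\|\,\|P^{-1}\|\ge 1$, the product of the two operator norms, which depends only on $\hat x$, submultiplicativity gives $\|\varphi(|x|\,\mathrm{ad}_{\hat x})\|\le \kappa\,\tilde\lambda_{\max}(|x|)$, which is the upper bound with $D:=\kappa$. For the lower bound I would distinguish two cases. If $\varphi(\lambda_j|x|)\ne 0$ for every $j$ --- equivalently $\varphi(|x|\,\mathrm{ad}_{\hat x})$ is invertible, i.e.\ $x$ is a regular point of $\exp$ --- then $\varphi(|x|\,\mathrm{ad}_{\hat x})^{-1}=P\,\varphi(|x|\Lambda)^{-1}P^{-1}$ has operator norm at most $\kappa/\tilde\lambda_{\min}(|x|)$, so $|d\exp_x(y)|\ge \tilde\lambda_{\min}(|x|)/\kappa$ for every unit $y$, giving the lower bound with $C:=1/\kappa$. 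If instead some $\varphi(\lambda_j|x|)=0$, then $\tilde\lambda_{\min}(|x|)=0$ and the lower bound holds trivially. I expect the main obstacle to be exactly the uniformity in $|x|$: the constants must not depend on the size of $x$, which is what forces the argument through the scale-invariant eigenbasis $P$ of $\mathrm{ad}_{\hat x}$ rather than through a diagonalization of $\mathrm{ad}_x$ itself; the degenerate case in which $\exp$ is singular at $x$ then comes along for free.
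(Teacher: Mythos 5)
Your proposal is correct and follows essentially the same route as the paper: conjugate $\mathrm{ad}_{\hat x}$ to diagonal form by a fixed matrix $P$ depending only on $\hat x$, observe that scaling by $|x|$ commutes with this conjugation, and take $C^{-1}=D=\|P\|\,\|P^{-1}\|$. The only cosmetic difference is in the lower bound, where you invoke the norm of the inverse (treating the singular case separately) while the paper uses its Lemma \ref{lem1} on minima of products; these are equivalent.
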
 
For a fix  unit vector $\hat{x}$, if $d\exp_{\hat{x}}$ is invertible, it's easy to find a constant $C_0$ such that for all $t \geq 1$:
    \begin{equation*}
        \left|1-e^{-\lambda_{j}t}\right| \geq C_0,\  1 \leq j \leq p \, .
    \end{equation*}
 This leads us to the following corollary: 
\begin{cor}
Assume $\hat{x}$ is a unit vector, $\emph{ad}_{\hat{x}}$ diagonalizable, $d\exp_{\emph{ad}_{\hat{x}}}$ invertible. Then there exist a positive constant $C$ such that for all $t \geq 1$ and any unit vector $y \in \mathfrak{g}$
\begin{equation}
     \left| d\exp_{t\hat{x}} (y) \right| \geq \frac{C}{t}\, .
\end{equation}
\end{cor}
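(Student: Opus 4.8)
The plan is to apply Theorem~\ref{thm-main} at the point $x=t\hat{x}$ and then bound the resulting quantity $\tilde{\lambda}_{\min}(t)$ below by a constant multiple of $1/t$. Fix $t\ge 1$. Since $\hat{x}$ is a unit vector, $t\hat{x}$ is nonzero, $\textrm{ad}_{t\hat{x}}=t\,\textrm{ad}_{\hat{x}}$ is diagonalizable, and $t\hat{x}/|t\hat{x}|=\hat{x}$; hence Theorem~\ref{thm-main} applies with $|x|=t$ and with the same nonzero eigenvalues $\lambda_{1},\dots,\lambda_{p}$ of $\textrm{ad}_{\hat{x}}$, producing a constant $C_{1}>0$ depending only on $\hat{x}$ such that $\bigl|d\exp_{t\hat{x}}(y)\bigr|\ge C_{1}\,\tilde{\lambda}_{\min}(t)$ for every unit vector $y\in\mathfrak{g}$. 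Thus the statement reduces to a lower bound for $\tilde{\lambda}_{\min}(t)$.

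To get that bound, first observe that the entry $1$ in the minimum defining $\tilde{\lambda}_{\min}(t)$ is $\ge 1/t$ once $t\ge 1$. Next, for each $j$ I would write
\begin{equation*}
  \left|\frac{1-e^{-\lambda_{j}t}}{\lambda_{j}t}\right| \;=\; \frac{1}{t}\cdot\frac{\bigl|1-e^{-\lambda_{j}t}\bigr|}{|\lambda_{j}|}
\end{equation*}
and insert the estimate recorded just before the corollary: invertibility of $d\exp_{\hat{x}}$ yields a constant $C_{0}>0$ with $\bigl|1-e^{-\lambda_{j}t}\bigr|\ge C_{0}$ for all $t\ge 1$ and all $j$. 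Setting $\Lambda:=\max_{1\le k\le p}|\lambda_{k}|$, each of the $p$ terms is then at least $C_{0}/(\Lambda t)$, so $\tilde{\lambda}_{\min}(t)\ge C_{2}/t$ with $C_{2}:=\min\{1,\,C_{0}/\Lambda\}>0$. Combining this with the inequality from Theorem~\ref{thm-main} gives $\bigl|d\exp_{t\hat{x}}(y)\bigr|\ge C_{1}C_{2}/t$ for all $t\ge 1$ and all unit $y$, and one takes $C:=C_{1}C_{2}$.

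The step I expect to be the real obstacle is the uniform bound $\bigl|1-e^{-\lambda_{j}t}\bigr|\ge C_{0}$ on $[1,\infty)$. Writing $\lambda_{j}=a+bi$ one computes $\bigl|1-e^{-\lambda_{j}t}\bigr|^{2}=1-2e^{-at}\cos(bt)+e^{-2at}$, which is $\ge(1-e^{-at})^{2}\ge(1-e^{-a})^{2}$ when $a>0$ and $\ge(e^{-at}-1)^{2}\ge(e^{-a}-1)^{2}$ when $a<0$, both uniformly for $t\ge 1$; so eigenvalues with nonzero real part are harmless. The subtle case is a purely imaginary eigenvalue $\lambda_{j}=bi$, for which $\bigl|1-e^{-\lambda_{j}t}\bigr|=2\bigl|\sin(bt/2)\bigr|$ returns to $0$ for arbitrarily large $t$; so the asserted bound — and hence the corollary as literally stated — requires $\textrm{ad}_{\hat{x}}$ to have no nonzero purely imaginary eigenvalue. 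I would therefore read the hypothesis as ``$d\exp_{t\hat{x}}$ is invertible for every $t\ge 1$'' (equivalently, no $t\lambda_{j}$ with $t\ge 1$ lies in $2\pi i\,\mathbb{Z}\setminus\{0\}$), which indeed forces every $\lambda_{j}$ off the imaginary axis; under that reading the argument above is complete.
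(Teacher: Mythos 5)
Your argument follows exactly the route the paper intends: invoke Theorem~\ref{thm-main} at $x=t\hat{x}$ (noting $\widehat{t\hat x}=\hat x$, so the constant from that theorem is uniform in $t$), and then bound $\tilde{\lambda}_{\min}(t)$ below by $C_2/t$ using the estimate $|1-e^{-\lambda_j t}|\ge C_0$ that the paper asserts just before the corollary. That reduction and the arithmetic with $\Lambda=\max_k|\lambda_k|$ are all fine.

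The more valuable part of your write-up is the caveat at the end, and you are right to flag it: the paper's claim that invertibility of $d\exp_{\hat x}$ alone makes it ``easy to find'' a uniform $C_0$ with $|1-e^{-\lambda_j t}|\ge C_0$ for all $t\ge 1$ is simply false when some $\lambda_j$ is nonzero and purely imaginary, since then $|1-e^{-\lambda_j t}|=2|\sin(\operatorname{Im}(\lambda_j)t/2)|$ vanishes for arbitrarily large $t$. This is not a fringe case — it is exactly what happens for compact groups, where $\mathrm{ad}_{\hat x}$ is skew-adjoint; at those values of $t$ the operator $(1-e^{-\mathrm{ad}_{t\hat x}})/\mathrm{ad}_{t\hat x}$ has a genuine kernel in $\mathfrak{g}$, so the corollary's conclusion fails outright, not just the proof. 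Your proposed repair — reading the hypothesis as ``$d\exp_{t\hat x}$ is invertible for all $t\ge 1$,'' which forces every nonzero eigenvalue off the imaginary axis, after which your case analysis on $\operatorname{Re}(\lambda_j)$ gives the uniform $C_0$ — is the correct fix, and with it your proof is complete. So: same approach as the paper, but you have identified and closed a real gap in the paper's justification (indeed an error in the statement as literally given).
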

If $\textrm{ad}_x$ is not diagonalizable, the bounds in above theorem will not work in gerenal. However, by the minimax theorem for singular values (see Proposition \ref{mini-max}), maximum and minimum of $|d\exp_x(y)|$ taken over all unit vectors $y$ are indeed the smallest and the largest singular values of $(1-e^{-\textrm{ad}_x})/\textrm{ad}_x$. This would lead us to the bounds stated in the following theorem.
\begin{thm}\label{thm-MAIN}
Let $x \in \mathfrak{g}$ be non-zero, $\hat{x}=x/|x|$, and $\lambda_1,\cdots,\lambda_{p}$ be non-zero eigenvalues of $\emph{ad}_{\hat{x}}$, $\tilde{\lambda}_{\min}(|x|), \tilde{\lambda}_{\max}(|x|)$ be defined as in Theorem 1, $\mathfrak{g}_{\mathds{C}}=\mathfrak{g}\otimes \mathds{C}$ be the complexification of $\mathfrak{g}$. \\
Let $\displaystyle \delta_0=\max \left\{ \left|\emph{ad}_{x_1}(y_1)\right|:\ x_1,y_1 \in\mathfrak{g}_{\mathds{C}},\  |x_1|,|y_1|=1 \right\}$.\\ We have the following bounds:
\begin{enumerate}
    \item For all unit vectors $y$, 
    \begin{equation}
      \left( \frac{e^{\delta_0|x|}-1}{\delta_0|x|} \right)^{1-n}\prod_{j=1}^p\left| \frac{1-e^{-\lambda_{j}|x|}}{\lambda_{j}|x|}\right|  \leq \left| d\exp_{x} (y) \right| \leq \frac{e^{\delta_0|x|}-1}{\delta_0|x|}
    \end{equation}
    
    \item For every $x$, there are unit vectors $y_0,y_1 \in \mathfrak{g}$ such that 
    \begin{equation}
    \left| d\exp_{x} (y_0) \right| \leq \tilde{\lambda}_{\min}(|x|),\  \tilde{\lambda}_{\max}(|x|) \leq \left| d\exp_{x} (y_1) \right| 
    \end{equation}
\end{enumerate}
\end{thm}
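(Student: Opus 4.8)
The plan is to transfer everything to the single operator $A:=\dfrac{1-e^{-\mathrm{ad}_x}}{\mathrm{ad}_x}$ on $\mathfrak g$ (or its complexification $A_{\mathbb C}$ on $\mathfrak g_{\mathbb C}$): by left-invariance of $d$ one has $|d\exp_x(y)|=|Ay|$, so the statement reduces to a handful of facts about the singular values of $A$ — its operator norm, its determinant, and the extremal characterizations of Proposition~\ref{mini-max}. Write $\sigma_1\ge\cdots\ge\sigma_n$ for the singular values of $A$, $n=\dim\mathfrak g$, and $\varphi(z):=\frac{1-e^{-z}}{z}$, an entire function with $\varphi(0)=1$.

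For the upper bound in (1) I would expand $A=\sum_{k\ge 0}\frac{(-\mathrm{ad}_x)^k}{(k+1)!}$ and estimate $\|A\|\le\sum_{k\ge0}\frac{\|\mathrm{ad}_x\|^k}{(k+1)!}$. Since $\mathrm{ad}_x=|x|\,\mathrm{ad}_{\hat x}$ and $\|\mathrm{ad}_{\hat x}\|\le\delta_0$ (take $x_1=\hat x$ in the definition of $\delta_0$; operator norms are unchanged by complexification), this sum is at most $\sum_{k\ge0}\frac{(\delta_0|x|)^k}{(k+1)!}=\frac{e^{\delta_0|x|}-1}{\delta_0|x|}=:M$, using that $t\mapsto(e^t-1)/t$ is increasing. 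As $\sigma_1=\|A\|\le M$ and $|Ay|\le\sigma_1$ for a unit vector $y$, this is exactly the right-hand inequality. For the lower bound I would use $\prod_{i=1}^n\sigma_i=|\det A|$ together with the holomorphic functional calculus identity $\det A=\prod_\mu\varphi(\mu)$, the product over the eigenvalues $\mu$ of $\mathrm{ad}_x$ counted with multiplicity; since $x\in\ker\mathrm{ad}_x$ the zero eigenvalue contributes only factors $\varphi(0)=1$, leaving $|\det A|=\prod_{j=1}^p\bigl|\varphi(\lambda_j|x|)\bigr|$. Combining with $\sigma_i\le\sigma_1\le M$ for $i\le n-1$ gives $\sigma_n\ge|\det A|\,M^{1-n}$, and $|Ay|\ge\sigma_n$ for unit $y$ finishes part (1).

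For part (2), Proposition~\ref{mini-max} gives $\max_{|y|=1}|Ay|=\sigma_1$ and $\min_{|y|=1}|Ay|=\sigma_n$, each attained at a \emph{real} unit vector — an eigenvector of the self-adjoint operator $A^{*}A$ on $\mathfrak g$ for its largest, resp.\ smallest, eigenvalue; call these $y_1$ and $y_0$. It remains to show $\sigma_1\ge\tilde\lambda_{\max}(|x|)$ and $\sigma_n\le\tilde\lambda_{\min}(|x|)$. For this I would invoke the elementary inequality $\sigma_n(A)\le|c|\le\sigma_1(A)$ valid for every eigenvalue $c$ of $A_{\mathbb C}$: the right inequality is $|c|\,|v|=|A_{\mathbb C}v|\le\|A_{\mathbb C}\|\,|v|=\sigma_1\,|v|$, and the left one follows by applying the same to $A_{\mathbb C}^{-1}$ when $A$ is invertible (and is trivial, $\sigma_n=0$, otherwise). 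By the spectral mapping theorem the spectrum of $A_{\mathbb C}$ is $\{1\}\cup\{\varphi(\lambda_j|x|):1\le j\le p\}$, the $1$ coming from $\ker\mathrm{ad}_x$, so $\max_c|c|=\tilde\lambda_{\max}(|x|)$ and $\min_c|c|=\tilde\lambda_{\min}(|x|)$; hence $|d\exp_x(y_1)|=\sigma_1\ge\tilde\lambda_{\max}(|x|)$ and $|d\exp_x(y_0)|=\sigma_n\le\tilde\lambda_{\min}(|x|)$.

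The genuinely routine parts are the power-series summation and the determinant identity; I do not expect a real obstacle, but the point that needs care is the interplay between the real operator $A$ and the (possibly non-real) eigenvalues of $A_{\mathbb C}$ — one must use that singular values, operator norm and determinant are unaffected by complexification, that the extremal vectors in the minimax can be chosen real, and that $0$ is always an eigenvalue of $\mathrm{ad}_x$, which is precisely why the constant $1$ appears inside $\tilde\lambda_{\min}$ and $\tilde\lambda_{\max}$. A secondary bookkeeping issue is to count the $\lambda_j$ with multiplicity so that $|\det A|=\prod_{j=1}^p|\varphi(\lambda_j|x|)|$ matches the product in the statement.
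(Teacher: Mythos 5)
Your proposal is correct and follows essentially the same route as the paper: the power-series estimate $\|A\|\le (e^{\delta_0|x|}-1)/(\delta_0|x|)$ for the upper bound, the identity $\prod_k \tilde s_k = \prod_k|\tilde\lambda_k|=|\det A|$ combined with the spectral mapping theorem for the lower bound, and the extreme singular-value/eigenvalue inequalities for part (2). The only cosmetic differences are that you derive $\sigma_n\le|c|\le\sigma_1$ directly instead of quoting Weyl's inequality as the paper does, and you are a bit more explicit than the paper about why the extremal vectors can be taken to be real unit vectors in $\mathfrak{g}$.
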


When $\textrm{ad}_x$ is nilpotent, $(1-e^{-\textrm{ad}_x})/\textrm{ad}_x$ would a polynomial in terms of $|x|$ which leads us to the bounds stated in the following proposition.

\begin{prop}\label{exp-nilpotent}
Let $\mathfrak{g}$ be $p$-step nilpotent. Given a non-zero $x \in \mathfrak{g}$,  $|d\exp_x|=O(x^{p-1})$. There exist a polynomial $Q$ with $\deg(Q)\leq (n-1)(p-1)$, such that for all non-zero $x \in \mathfrak{g}$,  
\begin{equation}
    \min_{|y|=1} |d\exp_x(y)| \geq \frac{1}{Q(|x|)}\, .
\end{equation}
\end{prop}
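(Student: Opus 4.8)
The plan is to exploit $p$-step nilpotency of $\mathfrak{g}$, which forces $(\textrm{ad}_x)^p=0$ for every $x\in\mathfrak{g}$. Hence the operator $A_x:=(1-e^{-\textrm{ad}_x})/\textrm{ad}_x$ truncates to the finite sum $A_x=\sum_{k=0}^{p-1}\frac{(-1)^k}{(k+1)!}(\textrm{ad}_x)^k$, a polynomial in $\textrm{ad}_x$ of degree at most $p-1$. By left invariance of the metric $|d\exp_x(y)|=|A_x y|$, so $|d\exp_x|$ is the operator norm $\|A_x\|$ and, by the minimax theorem for singular values (Proposition~\ref{mini-max}), $\min_{|y|=1}|d\exp_x(y)|$ is the smallest singular value $\sigma_{\min}(A_x)$. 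The problem thus reduces to an upper bound for $\|A_x\|$ and, once $A_x$ is known to be invertible, an upper bound for $\|A_x^{-1}\|$, since $\sigma_{\min}(A_x)=1/\|A_x^{-1}\|$.

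The upper bound is obtained term by term. With $n=\dim\mathfrak{g}$ and $\delta_0$ as in Theorem~\ref{thm-MAIN} we have $\|\textrm{ad}_x\|\le\delta_0|x|$, hence $\|(\textrm{ad}_x)^k\|\le(\delta_0|x|)^k$ and
\begin{equation*}
\|A_x\|\ \le\ \sum_{k=0}^{p-1}\frac{(\delta_0|x|)^k}{(k+1)!}\ =:\ P(|x|),
\end{equation*}
a polynomial of degree $p-1$ in $|x|$; this is exactly the assertion $|d\exp_x|=O(|x|^{p-1})$.

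For the lower bound, write $A_x=I+N_x$ with $N_x=\sum_{k=1}^{p-1}\frac{(-1)^k}{(k+1)!}(\textrm{ad}_x)^k$. Since $N_x=\textrm{ad}_x\cdot R(\textrm{ad}_x)$ for a polynomial $R$ and $(\textrm{ad}_x)^p=0$, we get $N_x^{\,p}=0$; so $N_x$ is nilpotent, $A_x$ is invertible with $\det A_x=1$, and $A_x^{-1}=\operatorname{adj}(A_x)$. Each entry of the adjugate is, up to sign, an $(n-1)\times(n-1)$ minor of $A_x$, which by Hadamard's inequality is bounded in absolute value by $\|A_x\|^{n-1}$; hence $\|A_x^{-1}\|\le\|A_x^{-1}\|_{\mathrm{F}}\le n\|A_x\|^{n-1}$. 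Therefore
\begin{equation*}
\min_{|y|=1}|d\exp_x(y)|\ =\ \frac{1}{\|A_x^{-1}\|}\ \ge\ \frac{1}{n\|A_x\|^{\,n-1}}\ \ge\ \frac{1}{Q(|x|)},\qquad Q(t):=n\,P(t)^{\,n-1},
\end{equation*}
and $Q$ is a polynomial with nonnegative coefficients, hence strictly positive on $[0,\infty)$, of degree $(n-1)(p-1)$, as claimed. One should note in passing that operator norm, determinant and singular values are unchanged by complexifying $\mathfrak{g}$, so the nilpotency/determinant argument over $\mathfrak{g}_{\mathds{C}}$ applies to the real operator $A_x$.

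The only step with any content is the polynomial upper bound on $\|A_x^{-1}\|$: it is the combination of $\det A_x=1$ with the adjugate/Hadamard estimate that keeps the lower bound polynomial and fixes its degree at $(n-1)(p-1)$. (A sharper degree, namely $p-1$, can be extracted by expanding the truncating Neumann series $A_x^{-1}=\sum_{j=0}^{p-1}(-N_x)^j$ and discarding the terms in $(\textrm{ad}_x)^m$ with $m\ge p$, but the adjugate estimate already suffices for the stated bound.) The nilpotency truncation, the term-by-term estimate of $\|A_x\|$, and the identification of $\min_{|y|=1}|d\exp_x(y)|$ with $\sigma_{\min}(A_x)$ are all routine.
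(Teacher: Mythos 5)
Your proof is correct, and its overall strategy coincides with the paper's: truncate the series for $(1-e^{-\mathrm{ad}_x})/\mathrm{ad}_x$ using $\mathrm{ad}_x^p=0$, bound the operator norm by the polynomial $P(|x|)=\sum_{k=0}^{p-1}\frac{(\delta_0|x|)^k}{(k+1)!}$, observe that the determinant equals $1$, and convert that into a lower bound on the smallest singular value of order $\|A_x\|^{-(n-1)}$. The divergence is only in how the last step is executed. The paper invokes the Weyl-inequality consequence $\tilde s_n \ge \prod_k|\tilde\lambda_k|/\tilde s_1^{\,n-1}$ together with the spectral mapping theorem (all eigenvalues of $\mathrm{ad}_x$ vanish, so all $\tilde\lambda_k=1$ and the product is $1$), which yields $Q=P^{n-1}$ exactly. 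You instead write $A_x=I+N_x$ with $N_x$ nilpotent to get $\det A_x=1$, and then bound $\|A_x^{-1}\|=\|\operatorname{adj}(A_x)\|$ entrywise via Hadamard's inequality, arriving at $Q=nP^{n-1}$. Your route is more self-contained (it avoids the Weyl inequality from the preliminaries) at the cost of a harmless factor of $n$, which does not affect the degree bound $\deg Q\le (n-1)(p-1)$; both derivations are really the same linear-algebra fact $\sigma_{\min}(A)\gtrsim |\det A|/\sigma_{\max}(A)^{n-1}$ in two guises. Your closing remark that the smallest singular value is unchanged under complexification is a point the paper glosses over, and is worth keeping.
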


\section{Preliminaries}
To derive estimates on the norm of the differential of the exponential map, we will need some facts from functional calculus. Let $\mathcal{H}$ be a complex Hilbert space. Singular values of a linear bounded operator $T$ defined on Hilbert space $\mathcal{H}$ are the square roots of non-negative eigenvalues of the self-adjoint operator $T^{*}T$. Given a linear operator $T$, we enumerate the singular values $\{s_j(T) \},\ j=1,2,\cdots$ in a non-increasing order, and the eigenvalues $\{\lambda_j(T) \},\ j=1,2,\cdots$ so that the moduli are non-increasing. For a normal operator $T$, singular values are the absolute of eigenvalues. 

\par 
Assume that $\mathcal{H}$ is finite dimensional, $\dim \mathcal{H}=n$. We will use the following facts:
\begin{itemize}
    \item As a direct consequence of the \textit{Spectral Mapping Theorem} (Rudin \cite[Theorem 10.33]{Functional-Rudin}) for every complex function $f$ which is holomorphic on a domain including all eigenvalues of $T$: 
\begin{equation}\label{SpectralMT}
   \lambda_j(f(T))=f(\lambda_j(T)),\  1 \leq j \leq n\, . 
\end{equation}
   \item If $T$ is diagonal with respect to an orthonormal basis, then $T$ is normal and thus $s_j(T)=|\lambda_j(T)|, \ 1 \leq j \leq n$.
\end{itemize}

\par
Singular values of a linear operator defined on a finite dimensional Hilbert space are related to the norm of the operator on some subspaces of $\mathcal{H}$. This is known and minimax principle for singular values, Bhatia \cite[p.75]{Matrix-Analysis}:

\begin{prop}[The minimax principle for singular vlaues]\label{mini-max}
Given any operator on a finite dimensional Hilbert space $\mathcal{H}$, $\dim \mathcal{H}=n$, 
\begin{equation}
    \begin{split}
          s_j(T)& = \max_{\mathcal{M}:\dim \mathcal{M}=j} \min_{x \in \mathcal{M}, |x|=1} |T(x)| \\
         & =\min_{\mathcal{N}:\dim \mathcal{N}=n-j+1} \max_{x \in \mathcal{N}, |x|=1} |T(x)|
    \end{split}
    \end{equation}
for $1 \leq j \leq n$. 
\end{prop}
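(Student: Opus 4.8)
The plan is to reduce the statement to the classical Courant--Fischer minimax theorem for the self-adjoint operator $A:=T^{*}T$. By the definition of singular values recalled above, $s_j(T)=\sqrt{\lambda_j(T^{*}T)}$, and for every $x\in\mathcal{H}$ one has $|T(x)|^{2}=\langle T^{*}Tx,x\rangle=\langle Ax,x\rangle$, so the quantity being optimized is, up to a square root, the Rayleigh quotient of $A$. The operator $A$ is self-adjoint and positive semidefinite, hence admits an orthonormal eigenbasis $v_1,\dots,v_n$ with real eigenvalues $\mu_1\ge\cdots\ge\mu_n\ge 0$, and by construction $\mu_j=s_j(T)^{2}$. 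Since $t\mapsto\sqrt{t}$ is increasing on $[0,\infty)$, it commutes with both the inner and the outer extrema, so it suffices to establish the two Rayleigh-quotient identities
\[
\mu_j=\max_{\dim\mathcal{M}=j}\ \min_{x\in\mathcal{M},\,|x|=1}\langle Ax,x\rangle
     =\min_{\dim\mathcal{N}=n-j+1}\ \max_{x\in\mathcal{N},\,|x|=1}\langle Ax,x\rangle,
\]
and then take square roots throughout.

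For the max--min identity I would argue by two inequalities. For the lower bound I test the outer maximum on the subspace $\mathcal{M}_0=\operatorname{span}(v_1,\dots,v_j)$: writing a unit vector $x=\sum_{i=1}^{j}c_i v_i$ gives $\langle Ax,x\rangle=\sum_{i=1}^{j}\mu_i|c_i|^{2}\ge\mu_j\sum_{i=1}^{j}|c_i|^{2}=\mu_j$, so the inner minimum over $\mathcal{M}_0$ is at least $\mu_j$. For the reverse inequality, let $\mathcal{M}$ be any $j$-dimensional subspace; since $\dim\mathcal{M}+\dim\operatorname{span}(v_j,\dots,v_n)=j+(n-j+1)=n+1>n$, these two subspaces meet in a nonzero vector, which we may normalize to a unit vector $x=\sum_{i\ge j}c_i v_i$. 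Then $\langle Ax,x\rangle=\sum_{i\ge j}\mu_i|c_i|^{2}\le\mu_j$, so the inner minimum over $\mathcal{M}$ is at most $\mu_j$, and taking the maximum over all such $\mathcal{M}$ preserves this bound. Combining the two inequalities yields the first identity.

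The min--max identity is entirely dual: testing the outer minimum on $\mathcal{N}_0=\operatorname{span}(v_j,\dots,v_n)$, of dimension $n-j+1$, shows the value is at most $\mu_j$, while for an arbitrary $(n-j+1)$-dimensional $\mathcal{N}$ the dimension count $\dim\mathcal{N}+\dim\operatorname{span}(v_1,\dots,v_j)=n+1$ again forces a nonzero intersection with $\operatorname{span}(v_1,\dots,v_j)$, on which the Rayleigh quotient is at least $\mu_j$; hence the inner maximum is at least $\mu_j$, and the outer minimum is at least $\mu_j$ as well.

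The computations themselves are routine; the one step carrying the real content is the dimension-counting argument guaranteeing a nontrivial intersection of $\mathcal{M}$ (or $\mathcal{N}$) with the complementary span of eigenvectors, so I would isolate that intersection fact explicitly and take care with the index bookkeeping (the shift by $j$ versus $n-j+1$, and the handling of ties among equal eigenvalues, for which the argument goes through verbatim). Once both Rayleigh identities are in hand, applying $\sqrt{\cdot}$ and recalling $s_j(T)=\sqrt{\mu_j}$ finishes the proof immediately.
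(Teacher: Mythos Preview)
Your argument is correct and is the standard proof: reduce to the Courant--Fischer minimax theorem for the self-adjoint operator $T^{*}T$ via $|T(x)|^{2}=\langle T^{*}Tx,x\rangle$, use the dimension-counting intersection trick on the eigenspaces of $T^{*}T$, and take square roots at the end.

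The paper, however, does not prove this proposition at all; it simply states it as a known fact with a citation to Bhatia, \textit{Matrix Analysis}, p.~75. So there is no ``paper's own proof'' to compare against. Your write-up is essentially the argument one finds in Bhatia (or any standard source), so in that sense you have supplied exactly what the paper chose to omit by reference.
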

In particular, the minimax principle implies that:
\begin{equation}
    \max_{|x|=1} |T(x)|=s_1(T),\quad \min_{|x|=1} |T(x)|=s_n(T)\, .
\end{equation}

We also need the following proposition, often known as Weyl's inequality, Birman \cite[p.258]{Spectral-Self-Adjoint}: 

\begin{prop}
Let $\{\lambda_k(T) \}$ be the sequence of eigen-values of a compact operator $T$ on a Hilbert space $\mathcal{H}$, enumerated so that the moduli are non-increasing, and $\{s_k(T) \}$ be its singular values in a non-increasing order. Then 
\begin{equation}\label{Weyl}
    \prod_{1}^r |\lambda_k(T)| \leq \prod_{1}^r s_k(T), \quad r=1,2,\cdots
\end{equation}
\end{prop}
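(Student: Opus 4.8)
The plan is to deduce Weyl's inequality from the elementary spectral-radius bound $|\lambda_1(A)| \leq s_1(A)$, valid for any compact operator $A$, applied not to $T$ itself but to its $r$-th exterior power $\Lambda^r T$ acting on $\Lambda^r \mathcal{H}$. The bound itself is immediate from the minimax identity $s_1(A)=\max_{|v|=1}|A(v)|$ of Proposition \ref{mini-max}: if $A(v)=\mu v$ with $v\neq 0$, then $|\mu|\,|v|=|A(v)|\leq s_1(A)\,|v|$, so every eigenvalue satisfies $|\mu|\leq s_1(A)$. The whole point of passing to exterior powers is that it converts a product of the $r$ leading eigenvalues (respectively singular values) of $T$ into the single leading eigenvalue (respectively singular value) of $\Lambda^r T$, so that this scalar inequality for $\Lambda^r T$ becomes exactly \eqref{Weyl}.

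Concretely, I would first establish the two identities
\begin{equation*}
    \prod_{k=1}^r |\lambda_k(T)| = |\lambda_1(\Lambda^r T)|, \qquad \prod_{k=1}^r s_k(T) = s_1(\Lambda^r T)\,.
\end{equation*}
For the eigenvalue identity I would use Schur triangularization: pick an orthonormal basis in which $T$ is upper triangular with diagonal entries $\lambda_1(T),\lambda_2(T),\dots$ arranged by non-increasing modulus. On the induced orthonormal basis $\{e_{i_1}\wedge\cdots\wedge e_{i_r}: i_1<\cdots<i_r\}$ of $\Lambda^r\mathcal{H}$, the operator $\Lambda^r T$ is again upper triangular for the lexicographic order on multi-indices, with diagonal entries $\lambda_{i_1}(T)\cdots\lambda_{i_r}(T)$. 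These are therefore its eigenvalues, and the one of largest modulus corresponds to the $r$ smallest indices, namely $\lambda_1(T)\cdots\lambda_r(T)$.

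For the singular-value identity I would instead invoke the singular value (Schmidt) decomposition $T=U\Sigma V^{*}$ with $U,V$ unitary and $\Sigma$ diagonal with non-increasing entries $s_k(T)$. Functoriality of the exterior power gives $\Lambda^r T=(\Lambda^r U)(\Lambda^r\Sigma)(\Lambda^r V)^{*}$; since the exterior power of a unitary is unitary and $\Lambda^r\Sigma$ is diagonal with entries $s_{i_1}(T)\cdots s_{i_r}(T)$, these products are precisely the singular values of $\Lambda^r T$, the largest being $s_1(T)\cdots s_r(T)$. Combining the two identities with $|\lambda_1(\Lambda^r T)|\leq s_1(\Lambda^r T)$ then yields the proposition.

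The main obstacle is the infinite-dimensional, merely compact setting: I must check that $\Lambda^r T$ is itself compact, so that $\lambda_1(\Lambda^r T)$ and $s_1(\Lambda^r T)$ are meaningful, and that the triangularization and Schmidt computations remain valid. Compactness of $\Lambda^r T$ follows because $T$ is a norm-limit of finite-rank operators, $\Lambda^r$ is continuous, and $\Lambda^r$ sends finite-rank operators to finite-rank operators. For the spectral bookkeeping I would reduce to finite dimensions: since only the $r$ leading eigenvalues and singular values enter \eqref{Weyl}, I may restrict to the finite-dimensional subspace spanned by eigenvectors for the $r$ largest eigenvalues together with the span of the leading Schmidt vectors, on which the computations above apply verbatim. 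This truncation loses nothing, and the compact case follows from the finite-dimensional one.
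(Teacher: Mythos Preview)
The paper does not supply its own proof of this proposition: it is quoted as a known result (Weyl's inequality) with a citation to Birman \cite[p.258]{Spectral-Self-Adjoint}, and only its consequences \eqref{eigen<sing}, \eqref{eigen>sing} are used downstream. So there is no in-paper argument to compare against.

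Your exterior-power argument is the classical route and is sound in finite dimensions, which is all the paper actually needs (every operator to which \eqref{Weyl} is applied here acts on the finite-dimensional space $\mathfrak{g}_{\mathds{C}}$). The one soft spot is your reduction of the general compact case to finite dimensions: the subspace you propose---spanned by eigenvectors for the top $r$ eigenvalues together with the leading Schmidt vectors---need not be $T$-invariant, so restricting $T$ to it does not a priori preserve the relevant eigenvalues or singular values. The standard fix is either to run Schur triangularization directly on the closed span of generalized eigenvectors for the nonzero spectrum (which exists for compact $T$), or to note that if any $\lambda_k(T)=0$ for $k\leq r$ the inequality is trivial, and otherwise the Riesz spectral projection onto the top $r$ eigenvalues gives a genuine invariant finite-dimensional block. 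Either way the gap is minor and does not affect the finite-dimensional application in this paper.
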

Letting $r=1$, we conclude
\begin{equation}\label{eigen<sing}
    |\lambda_1(T)| \leq |s_1(T)|\, .
\end{equation}
In addittion, if $\mathcal{H}$ is finite dimensional, $\dim \mathcal{H}=n$, the identity
\begin{equation*}\label{singular-eigen-det}
 \prod_{1}^n |\lambda_k(T)| = \prod_{1}^n s_k(T) = |\det(T)| 
\end{equation*}
together with the inequality \ref{Weyl} for $r=n-1$, implies that 
\begin{equation}\label{eigen>sing}
    s_n(T)\leq |\lambda_n(T)|\, .
\end{equation}
\section{Main Results}

To apply the results of the previous section for bounding the derivative of the exponential map we need to work with a complex Lie algebra. For this reason we consider the complexification $\mathfrak{g}_{\mathds{C}}=\mathfrak{g}\otimes \mathds{C}$. The inner product of $\mathfrak{g}$ induced by the metric $d$, may be extended to an inner product in $\mathfrak{g}\otimes \mathds{C}$.
Moreover $(1-e^{-\textrm{ad}_x})/\textrm{ad}_x$ can be regarded as a linear operator on $\mathfrak{g}_{\mathds{C}}$.  Since the problem of estimating the norm the exponential map reduces to estimating the norm of a linear map over the Lie algebra, we can do estimates in $\mathfrak{g}_{\mathds{C}}$ and then derive the bounds for the real case. We will use the following notations:
\begin{notation}
Throughout this section $\mathfrak{g}_{\mathds{C}}=\mathfrak{g} \otimes \mathds{C}$ is the complexification of $\mathfrak{g}$ and we consider $\textrm{End}_{\mathds{C}}(\mathfrak{g}_{\mathds{C}})$ as a Banach space with the operator norm.
 \begin{enumerate}
    \item For any $a \in \mathds{C}$, $x \in \mathfrak{g}$, $T \in \textrm{End}_{\mathds{C}}(\mathfrak{g}_{\mathds{C}})$, $|a|$, $|x|$, and $\| T \|$ represent the absolute value of $a$, norm of vector $x$, and the operator norm of the linear map $T$, respectively. $\| T\| _{\mathfrak{g}}$ represents the norm of $T|_{\mathfrak{g}}$.
    \item For non-zero $x$, $\hat{x}=x/|x|$,  $\lambda_1,\cdots,\lambda_n,\  |\lambda_1|\geq \cdots \geq |\lambda_n|=0$ are the eigenvalues, and $s_1\geq \cdots \geq s_n$ are the singular values of $\textrm{ad}_{\hat{x}}$.
    $\tilde{\lambda}_1(x),\cdots, \tilde{\lambda}_n(x)$ are the corresponding eigenvalues of $(1-e^{-\textrm{ad}_x})/\textrm{ad}_x$. $\tilde{s}_1(x),\cdots, \tilde{s}_n(x)$ are singular values of $(1-e^{-\textrm{ad}_x})/\textrm{ad}_x$ in a non-increasing order. In addition, we admit the convention $(1-e^0)/0=1$.
    \item Let $\mathcal{B}$ be a given basis in $\mathfrak{g}$. For an endomorphism $T \in \textrm{End}_{\mathds{C}}(\mathfrak{g}_{\mathds{C}})$, $[T]_{\mathcal{B}}$ is the matrix representation of $T$ in basis $\mathcal{B}$. For a $n \times n$ square complex matrix $A$, $A_{\mathcal{B}} \in \textrm{End}_{\mathds{C}}(\mathfrak{g}_{\mathds{C}})$ is the unique endomorphism such that $[A_{\mathcal{B}}]_{\mathcal{B}}=A$. 
\end{enumerate}
\end{notation}
In order to prove Theorem 1 we need the following lemma: 
\begin{lem}\label{lem1}
Let $V$ be a finite dimensional complex inner product space, and $P,Q,T \in \textrm{End}_{\mathds{C}}(V)$, $P$ and $Q$ invertible. Then 
\begin{equation}
    \min_{|y|=1}|PTQ(y)| \geq \min_{|y|=1}|P(y)| \cdot \min_{|y|=1}|T(y)| \cdot \min_{|y|=1} |Q(y)|.
\end{equation}
\end{lem}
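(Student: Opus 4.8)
The plan is to reduce the claim to two applications of the minimax characterization of the smallest singular value, $\min_{|y|=1}|S(y)| = s_{\dim V}(S)$ for any $S \in \textrm{End}_{\mathds{C}}(V)$, together with the submultiplicativity of the operator norm and the fact that for an invertible operator $P$ one has $\min_{|y|=1}|P(y)| = \|P^{-1}\|^{-1} > 0$. First I would observe that since $P$ and $Q$ are invertible, $PTQ$ has the same rank as $T$; in particular $\min_{|y|=1}|PTQ(y)| = 0$ precisely when $\min_{|y|=1}|T(y)| = 0$, so the inequality is trivial in the degenerate case and we may assume $T$ is invertible too. Then every operator appearing is invertible, and the quantity $\min_{|y|=1}|S(y)|$ equals $\|S^{-1}\|^{-1}$.

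The key step is then the chain
\begin{equation*}
    \min_{|y|=1}|PTQ(y)| = \frac{1}{\|(PTQ)^{-1}\|} = \frac{1}{\|Q^{-1}T^{-1}P^{-1}\|} \geq \frac{1}{\|Q^{-1}\|\,\|T^{-1}\|\,\|P^{-1}\|},
\end{equation*}
where the inequality is submultiplicativity of the operator norm. Rewriting each factor on the right via $\|S^{-1}\|^{-1} = \min_{|y|=1}|S(y)|$ gives exactly
\begin{equation*}
    \min_{|y|=1}|PTQ(y)| \geq \min_{|y|=1}|P(y)| \cdot \min_{|y|=1}|T(y)| \cdot \min_{|y|=1}|Q(y)|,
\end{equation*}
which is the assertion. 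An essentially equivalent route, if one prefers to avoid inverses, is to argue directly: for any unit $y$, write $PTQ(y) = P(T(Q(y)))$, note $|Q(y)| \geq \min_{|z|=1}|Q(z)| =: m_Q$, so $Q(y)/|Q(y)|$ is a unit vector with $|T(Q(y))| = |Q(y)|\,|T(Q(y)/|Q(y)|)| \geq m_Q \cdot m_T$, and similarly peel off $P$; taking the infimum over unit $y$ finishes it. I would probably present this second version since it is self-contained and uses only the definition of the minimum, making the degenerate case handling transparent as well.

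The main (and only real) obstacle is bookkeeping: one must be careful that the intermediate vectors $Q(y)$ and $T(Q(y))$ are nonzero before normalizing, which is exactly why invertibility of $P$ and $Q$ (and the reduction to invertible $T$) is invoked; there is no analytic difficulty here. It is worth noting for the later application that this lemma is stated for complex inner product spaces precisely so it can be applied on $\mathfrak{g}_{\mathds{C}}$ with $T = (1-e^{-\textrm{ad}_x})/\textrm{ad}_x$ and $P, Q$ the change-of-basis operators diagonalizing $\textrm{ad}_{\hat{x}}$.
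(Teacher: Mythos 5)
Your proposal is correct, and the second, self-contained version you say you would present --- peeling off $Q$ by normalizing $Q(y)/|Q(y)|$ and then repeating for $P$, after reducing to invertible $T$ since the right-hand side vanishes otherwise --- is exactly the argument the paper gives. The first route via $\min_{|y|=1}|S(y)|=\|S^{-1}\|^{-1}$ and submultiplicativity is a valid equivalent alternative but adds nothing here.
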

\begin{proof}
Without loss of generality we can assume $T$ is invertible (other wise the right side of inequality is zero). We have: 
\begin{equation*}
    \min_{|y|=1}|TQ(y)|= \min_{|y|=1}\left|T\left(\frac{Q(y)}{|Q(y)|}\right)|Q(y)|\right| \geq \min_{|y|=1}|T(y)| \cdot \min_{|y|=1} |Q(y)|\ , 
\end{equation*}
repeating the argument one more time gives the desired inequality.
\end{proof}
We may proceed to prove Theorem \ref{thm-main}:
\begin{proof}[Proof of Theorem \ref{thm-main}]

Assume $\textrm{ad}_x$ is diagonalizable, let $\mathcal{B}$ be an eigenbasis for $\textrm{ad}_{\hat{x}}$, and $\mathcal{F}$ an arbitray orthonormal basis. Let $P$ be the change of basis matrix from $\mathcal{F}$ to $\mathcal{B}$. Then $[\textrm{ad}_x]_{\mathcal{F}}=P^{-1}[\textrm{ad}_x]_{\mathcal{B}}P=P^{-1}\textrm{diag}(\lambda_{1}|x|,\cdots,\lambda_n|x|)P$ and thus
\begin{equation*}
    \left[\frac{1-e^{-\textrm{ad}_x}}{\textrm{ad}_x}\right]_{\mathcal{F}}=
    P^{-1} \textrm{diag}\left(\frac{1-e^{-\lambda_{1}|x|}}{\lambda_{1}|x|},\cdots,\frac{1-e^{-\lambda_{n}|x|}}{\lambda_{n}|x|}\right) P
\end{equation*}
So we have:  
\begin{equation*}
    \begin{split}
    & \max_{|y|=1,\, y\in \mathfrak{g}}\left| d\exp_{x} (y) \right| \leq 
    \left\|\frac{1-e^{-\textrm{ad}_x}}{\textrm{ad}_x}\right\|  \\
     & \leq  \|P_{\mathcal{F}}^{-1}\| \left\|  \textrm{diag}\left(\frac{1-e^{-\lambda_{1}|x|}}{\lambda_{1}|x|},\cdots,\frac{1-e^{-\lambda_{n}|x|}}{\lambda_{n}|x|}\right)_{\mathcal{F}}\right\|  \|P_{\mathcal{F}}\|
    \\
    & =\|P_{\mathcal{F}}^{-1}\|\|P_{\mathcal{F}}\| \tilde{\lambda}_{\max}(|x|)  \, ,
    \end{split}
\end{equation*}
In addition, using Lemma \ref{lem1} it follows that:
\begin{equation*}
    \begin{split}
    & \min_{|y|=1,\, y\in \mathfrak{g}}\left| d\exp_{x} (y) \right| \geq \min_{|y|=1,\, y\in \mathfrak{g}_{\mathds{C}}}
    \left|\frac{1-e^{-\textrm{ad}_x}}{\textrm{ad}_x}(y)\right|  \\
     & \geq  \min_{|y|=1} |P_{\mathcal{F}}^{-1}(y)| \cdot \min_{|y|=1} \left|\textrm{diag}\left(\frac{1-e^{-\lambda_{1}|x|}}{\lambda_{1}|x|},\cdots,\frac{1-e^{-\lambda_{n}|x|}}{\lambda_{n}|x|}\right)_{\mathcal{F}}(y)\right| \cdot
    \\
    & \, \quad \min_{|y|=1} |P_{\mathcal{F}}(y)|= \min_{|y|=1} |P_{\mathcal{F}}^{-1}(y)| \tilde{\lambda}_{\min}(|x|) \min_{|y|=1} |P_{\mathcal{F}}(y)|\, .
    \end{split}
\end{equation*}

\end{proof}

\begin{proof}[Proof of Theorem \ref{thm-MAIN}]

Let $\displaystyle \delta_0=\max \left\{ \left|\emph{ad}_{x_1}(y_1)\right|:\ x_1,y_1 \in\mathfrak{g}_{\mathds{C}},\  |x_1|,|y_1|=1 \right\}$ then $\|\textrm{ad}_x\| \leq \delta_0 |x|$. Noting  
\begin{equation}
    \frac{1-e^{-\textrm{ad}_x}}{\textrm{ad}_x}=\sum_{k=0}^{\infty} \frac{(-1)^k}{(k+1)!}\textrm{ad}^k_x
\end{equation}
and the absolute convergence of the above power series, we find that:
\begin{equation*}
    \begin{split}
        \left\| \frac{1-e^{-\textrm{ad}_x}}{\textrm{ad}_x} \right\|& = 
        \left\| \sum_{k=0}^{\infty} \frac{(-1)^k}{(k+1)!} \textrm{ad}_x^k
        \right\|  \leq  \sum_{k=0}^{\infty} \frac{1}{(k+1)!} \|\textrm{ad}_x^k\|
        \\
    & \leq \sum_{k=0}^{\infty} \frac{1}{(k+1)!} \|\textrm{ad}_x\|^k =   \frac{e^{\|\textrm{ad}_x\|}-1}{\|\textrm{ad}_x\|}  \leq \frac{e^{\delta_0|x|}-1}{\delta_0|x|}
    \end{split}
\end{equation*}
proving the upper bound. 

\par Applying the equation \ref{singular-eigen-det} we colnculde:
\begin{equation*}
    \begin{split}
      \left| d\exp_{x} (y) \right|  & \geq \tilde{s}_n(x) \geq \frac{\prod_1^n {|\tilde{\lambda}_k(x)|}}{(\tilde{s}_1(x))^{n-1}} = \frac{\prod_1^p {|\tilde{\lambda}_j(x)|}}{(\tilde{s}_1(x))^{n-1}} \\
       & \geq \left( \frac{e^{\delta_0|x|}-1}{\delta_0|x|} \right)^{1-n} \prod_1^p {|\tilde{\lambda}_j(x)|}\, .
    \end{split}
\end{equation*}
By the identity \ref{SpectralMT}:
\begin{equation*}
    \tilde{\lambda}_j(x)= \frac{1-e^{-\lambda_{j}|x|}}{\lambda_{j}|x|},\ 1\leq j \leq p \, ,
\end{equation*}
completing the lower bound proof.

The second statement in the theorem follows from the inequalities \ref{eigen<sing} and \ref{eigen>sing} applied to $T=(1-e^{-\textrm{ad}_x})/\textrm{ad}_x$.

\end{proof}
\begin{proof}[Proof of Proposition \ref{exp-nilpotent}]
Let $x \in \mathfrak{g}$ be non-zero, and Let $\displaystyle \delta_0$ be defined as in the statement of Theorem \ref{thm-MAIN}. We have:
\begin{equation*}
\begin{split}
    \|d\exp_x \| & = \left\| \frac{1-e^{-\textrm{ad}_x}}{\textrm{ad}_x}\right\|_{\mathfrak{g}} \leq \left\| \frac{1-e^{-\textrm{ad}_x}}{\textrm{ad}_x}\right\| = \left\| \sum_{k=0}^{p-1} \frac{(-1)^k}{(k+1)!} |x|^k \textrm{ad}_{\hat{x}}^k \right\|  \\
     & \leq \sum_{k=0}^{p-1} \frac{1}{(k+1)!} |x|^k \|\textrm{ad}_{\hat{x}}\|^k \leq \sum_{k=0}^{p-1} \frac{1}{(k+1)!} |x|^k \delta_0^k =O(|x|^{p-1})\, .
\end{split}
\end{equation*}
Let 
\begin{equation*}
    Q_1(|x|)=\sum_{k=0}^{p-1} \frac{1}{(k+1)!} |x|^k \delta_0^k, \quad Q=Q_1^{n-1} 
\end{equation*}
then $\tilde{s}_1(x) \leq Q_1(|x|)$. Noting $\prod_1^n {|\tilde{\lambda}_k(x)|}=1$ we have:
\begin{equation*}
    \min_{|y|=1} \left| d\exp_{x} (y) \right| \geq \tilde{s}_n(x) \geq \frac{\prod_1^n {|\tilde{\lambda}_k(x)|}}{(\tilde{s}_1(x))^{n-1}} \geq \frac{1}{Q(|x|)}\, . 
\end{equation*}
\end{proof}

\bibliography{aomsample}
\bibliographystyle{aomalpha}

\end{document}